\newdimen\plusheight
\def\+{\;\lower\plusheight\hbox{$+$}\;}
\newdimen\minusheight
\def\-{\;\lower\minusheight\hbox{$-$}\;}
\newdimen\cdotsheight
\def\cds{\lower\cdotsheight\hbox{$\cdots$}}
\def\leqalignno#1{\displ@y \tabskip\z@ plus\@ne fil
  \halign to\displaywidth{\hfil$\@lign\displaystyle{##}$\tabskip\z@skip
    &$\@lign\displaystyle{{}##}$\hfil\tabskip\z@ plus\@ne fil
    &\kern-\displaywidth\rlap{$\@lign\hbox{\rm##}$}\tabskip\displaywidth\crcr
    #1\crcr}}
\let\dotlessi=\i
\newcommand{\eb}{\begin{equation}}
\newcommand{\ee}{\end{equation}}
\newcommand{\df}{\dfrac}
\newcommand{\tf}{\tfrac}
\renewcommand{\d}{{\delta}}
\renewcommand{\(}{\left\(}
\renewcommand{\)}{\right\)}
\renewcommand{\[}{\left\[}
\renewcommand{\]}{\right\]}
\renewcommand{\i}{\infty}
\renewcommand{\pmod}[1]{\,(\textup{mod}\,#1)}
\numberwithin{equation}{section}
 \theoremstyle{plain}
\newtheorem{theorem}{Theorem}[section]
\newtheorem{entry}[theorem]{Entry}
\begin{document}
\title{On a Theorem of A.~I.~Popov on Sums of Squares}
\author{Bruce C.~Berndt, Atul Dixit, Sun Kim, and Alexandru Zaharescu}
\address{Department of Mathematics, University of Illinois, 1409 West Green
Street, Urbana, IL 61801, USA} \email{berndt@illinois.edu}
\address{Department of Mathematics, Indian Institute of Technology, Gandhinagar, Palaj, Gandhinagar 382355, Gujarat, India}\email{adixit@iitgn.ac.in}
\address{Department of Mathematics, University of Illinois, 1409 West Green
Street, Urbana, IL 61801, USA} \email{sunkim2@illinois.edu}
\address{Department of Mathematics, University of Illinois, 1409 West Green
Street, Urbana, IL 61801, USA and \newline
		Simion Stoilow Institute of Mathematics of the Romanian
		Academy, P.O. Box 1--764, RO--014700 Bucharest, Romania.} \email{zaharesc@illinois.edu}

\begin{abstract}
Let $r_k(n)$ denote the number of representations of the positive integer $n$ as the sum of $k$ squares.  In 1934, the Russian mathematician A.~I.~Popov  stated, but did not rigorously prove, a beautiful series transformation involving $r_k(n)$ and certain Bessel functions.  We provide a proof of this identity for the first time, as well as for another identity, which can be regarded as both an analogue of Popov's identity and an identity involving $r_2(n)$ from Ramanujan's lost notebook.
\end{abstract}

\subjclass[2010]{Primary: 11E25; Secondary: 33C10}

\keywords{sums of squares, Bessel functions, Vorono\"{\dotlessi} summation formula, Dirichlet
series, Dirichlet characters, Ramanujan's lost notebook}

\maketitle

\section{Introduction} In an obscure paper \cite{popov} written in Russian, A.~I.~Popov offers the following beautiful identity involving $r_k(n)$, the number of representations of the positive integer $n$ as a sum of $k$ squares.  Let $J_{\nu}(z)$ denote the ordinary Bessel function of order $\nu$ and let $I_{\nu}(z)$ denote the Bessel function of imaginary argument of order $\nu$, usually so denoted.  If
$\text{Re }t>0$, then
\begin{align}\label{88}
&\df{\pi^{k/2-1}z^{k/4-1/2}}{\Gamma(\tf12k)}+\sum_{n=1}^{\infty}r_{k}(n)\df{J_{k/2-1}(2\pi\sqrt{nz})}{n^{k/4-1/2}}e^{-\pi nt}\notag\\
&=\df{e^{-\pi z/t}}{t}\left\{\df{\pi^{k/2-1}z^{k/4-1/2}}{t^{k/2-1}\Gamma(\tf12k)}
+\sum_{n=1}^{\infty}r_{k}(n)\df{I_{k/2-1}\left(\df{2\pi\sqrt{nz}}{t}\right)}{
  n^{k/4-1/2}}e^{-\pi n/t}\right\}.
\end{align}
  Why is \eqref{88} a fascinating identity?  In the remainder of this paragraph, we attempt to convince readers to share the authors' fascination.  Recall \cite[p.~18]{cn} that if
$$ \zeta_k(s):=\sum_{n=1}^{\infty}\df{r_k(n)}{n^s}, \qquad k\geq 2, \quad \text{Re }s>\tf12k,$$
then $\zeta_k(s)$ can be analytically continued to the entire complex plane, except for a simple pole at $s=\tf12 k$ with residue $\pi^{k/2}/\Gamma(\tf12k)$,  and $\zeta_k(s)$ satisfies the functional equation
\begin{equation}\label{funcequa1}
\pi^{-s}\Gamma(s)\zeta_k(s)=\pi^{-(k/2-s)}\Gamma(k/2-s)\zeta_k(k/2-s).
\end{equation}
Second, the associated theta transformation formula is given by \cite[p.~19]{cn}
\begin{equation}\label{thetatrans}
\sum_{n=0}^{\infty}r_k(n)e^{-\pi ny}=y^{-k/2}\sum_{n=0}^{\infty}r_k(n)e^{-\pi n/y}, \qquad \text{Re }y>0.
\end{equation}
Thirdly, if $x>0$ and $q>\tf12(k-1)$, then \cite[p.~19]{cn}
{\allowdisplaybreaks\begin{align}\label{bessel1}
\df{1}{\Gamma(q+1)}&{\sum_{0\leq n\leq x}}^{\prime}r_k(n)(x-n)^{q}\nonumber\\
&\quad\quad=\df{\pi^{k/2}x^{k/2+q}}{\Gamma(q+1+k/2)}+\left(\df{1}{\pi}\right)^{q}
\sum_{n=1}^{\infty}r_k(n)\left(\df{x}{n}\right)^{k/4+q/2}J_{k/2+q}(2\pi\sqrt{nx}),
\end{align}}%
where the series on the right-hand side converges absolutely.  As indicated in Theorem \ref{cntheorem} below, we can extend the validity of \eqref{bessel1} to $q>\tf12(k-3)$.  The prime $\prime$ on the summation sign on the left-hand side indicates that if $q=0$ and $x$ is an integer $N$, then we count only $\tf12r_k(N)$.  K.~Chandrasekharan and R.~Narasimhan \cite[Theorem I, p.~10]{cn} prove a general theorem in which they show that the functional equation, theta transformation, and Bessel series identity are equivalent, i.e., if any one of these three identities  holds, then the other two also hold.  Thus, in the special case above, the functional equation \eqref{funcequa1}, theta relation \eqref{thetatrans}, and Bessel series identity \eqref{bessel1} are equivalent, i.e., if any one of \eqref{funcequa1}--\eqref{bessel1} holds, the other two are also valid.

Let us now examine \eqref{88}.  The powers of $n$ in the denominators on both sides are remindful of \eqref{funcequa1}.  The exponentials $e^{-\pi nt}$ on the left-hand side and $e^{-\pi n/t}$ on the right-hand side harken back to the theta transformation formula \eqref{thetatrans}.  Lastly, the appearance of Bessel functions in \eqref{88} reminds us of the identity \eqref{bessel1}.  Hence, the identity \eqref{88} incorporates features of all three equivalent identities, \eqref{funcequa1}, \eqref{thetatrans}, and \eqref{bessel1}!

Popov only very briefly sketches his proof of \eqref{88}, and it appears that his proof is deficient.  In particular, he uses the identity \eqref{bessel1} under the assumption that it is valid for $q>-1$, which contrasts with the requirement of Chandrasekharan and Narasimhan \cite[p.~14, Theorem III]{cn} $q>\tf12(k-3)$, which appears to be best possible.  In particular, if $q=0$, then the case $k=2$ is the only instance when \eqref{bessel1} is valid.  Popov employs the case $q=0$ of \eqref{bessel1} to derive a version of the Vorono\"{\dotlessi} summation formula, which he in turn utilizes to establish \eqref{88}. However, Popov does not offer any hypotheses for the generic function $f$ appearing in the Vorono\"{\dotlessi} summation formula.  In most formulations of the Vorono\"{\dotlessi} summation formula, the hypotheses on $f$ are too severe to permit the application that is needed to prove \eqref{88} for integers $k\geq2$.  See, for example, the paper \cite{V} providing references to several versions of the Vorono\"{\dotlessi} summation formula.   However, N.~S.~Koshliakov \cite{kosh1}, \cite{kosh2}, with a very ingenious argument, establishes a version of the Vorono\"{\dotlessi} summation formula that suffices for our purposes of proving \eqref{88} for all positive integers
$k>1$ without using (1.4), which as mentioned before, is not valid for $k>2$
when $q=0.$  (We remark that the papers \cite{kosh1} and \cite{kosh2} are identical.)  More precisely, we actually need an extension of Koshliakov's extension of the Vorono\"{\dotlessi} summation formula, but  our extension can be proved along exactly the same lines as those given by Koshliakov, and so we forego our proof.  Thus, a primary purpose of this paper is to provide not only a rigorous proof of Popov's formula \eqref{88} but to show that it can also be extended to a more general setting in which the arithmetical function $r_k(n)$ is replaced by a more general arithmetical function with its associated Dirichlet series satisfying a general functional equation, which we make precise.

Before describing our general setting, because most readers will not be familiar with his name or work, we provide a brief biography of Popov.  Alexander Ivanovich Popov  (1899--1973) was born in the Pskov region in northwest Russia.   He graduated from Leningrad University and taught at the Leningrad Polytechnic Institute.   During the period 1930--1945 he published 13 papers in mathematics.
He then turned to Finno-Ugric Linguistics (a group of languages in northeast Europe including the Finnish, Estonian, and Hungarian languages) and wrote a doctoral thesis on toponymics (the study of the origins of place names in countries).  He became perhaps the world's leading expert in Finno-Ugric Linguistics and is so regarded to this day, with his contributions to mathematics largely forgotten.  Nonetheless, after his aforementioned doctorate, he continued to teach mathematics and was the chair of the Department of Mathematical Logic and Philosophy at the  Machine-Building Institute at the Leningrad State University. During this period, he published a textbook `Introduction to Mathematical Logic', which not only contains theory but also practical applications of mathematical logic as the basis for design and operation of various ``smart'' and ``thinking'' machines \cite{gazeta}. For a biography of Popov emphasizing his contributions to linguistics, the reader is referred to \cite{bereczki}.

We close our introduction with descriptions of other theorems proved in this paper.  In his lost notebook \cite[p.~335]{lnb}, Ramanujan offered a beautiful theorem which can be considered as a two-variable analogue of \eqref{bessel1} with $k=2$ and $q=0$.
 To state Ramanujan's claim, we first need to
define
\begin{equation}\label{b.F}
F(x) = \begin{cases} [x], \qquad &\text{if $x $ is not an
integer}, \\
x-\tfrac{1}{2}, \qquad &\text{if $x$ is an integer}.
\end{cases}
\end{equation}
We now offer Ramanujan's beautiful identity.

\begin{entry}[p.~335]\label{b.besselseries} Let $F(x)$ be defined by \eqref{b.F}, and recall that $J_1(z)$ denotes the ordinary Bessel function of order $1$.
If $0 < \theta < 1$ and $x>0$, then
\begin{align}\label{b.b1.1}
\sum_{n=1}^{\infty}&F\left(\frac{x}{n}\right)\sin(2\pi n\theta)= \pi
x\left(\dfrac{1}{2}-\theta\right) -\df{1}{4}\cot(\pi\theta)\\
&+\frac{1}{2}\sqrt{x}\sum_{m=1}^{\infty}\sum_{n=0}^{\infty}
\left\{\df{J_1\left(4\pi\sqrt{m(n+\theta)x}\right)}{\sqrt{m(n+\theta)}}
-\df{J_1\left(4\pi\sqrt{m(n+1-\theta)x}\right)}{\sqrt{m(n+1-\theta)}}\right\}.\notag
\end{align}
\end{entry}

In \cite{bessel1}, the first and fourth authors proved \eqref{b.b1.1} with the order of summation of the double Bessel series on the right-hand side reversed.  Then in \cite{bessie3}, the first, third, and fourth authors established \eqref{b.b1.1} with the order of the double series indicated by Ramanujan.  Theorem \ref{pop} provides an analogue of Ramanujan's Entry \ref{b.besselseries} but in the spirit of Popov's identity \eqref{88}.

\section{Summation Formulas}\label{sumformulas}
We  use the notation and theorems from \cite{cn} and \cite{V}.  In the sequel, $\sigma=\text{Re }s$. Suppose that
\begin{equation*}
\varphi(s):=\sum_{n=1}^{\infty}a(n)\lambda_n^{-s} \quad\text{and}\quad \psi(s):=\sum_{n=1}^{\infty}b(n)\mu_n^{-s},
\end{equation*}
where $0<\lambda_1<\lambda_2<\cdots <\lambda_n\to\i$ and $0<\mu_1<\mu_2<\cdots<\mu_n\to\i$, and where the abscissae of absolute convergence are, respectively, $\sigma_a$ and $\sigma_a^*$.  We say that $\varphi(s)$ and $\psi(s)$ satisfy a functional equation of the type
\begin{equation}\label{funcequa}
\Gamma(s)\varphi(s)=\Gamma(r-s)\psi(r-s),
\end{equation}
for some $r>0$, if there exists a meromorphic function $\chi$ with the following properties:
\begin{align*}
\text{(i)}& \quad\chi(s)= \Gamma(s)\varphi(s), \quad \sigma>\sigma_a,\qquad \chi(s)= \Gamma(r-s)\psi(r-s),\quad \sigma<r-\sigma_a^*;\\
\text{(ii)}& \quad\lim_{|\text{Im }s|\to\infty}\chi(s)=0, \text{uniformly in every interval} -\infty<\sigma_1\leq\sigma\leq\sigma_2<\infty;\\
\text{(iii)}& \quad\text{the poles of $\chi$ are confined to a compact set}.
\end{align*}
  We could also consider further functional equations in which $\Gamma(s)$ is replaced by other products of gamma functions, e.g., $\Gamma^2(\tf12s)$.

   For real $q$ and $x>0$, first set
\begin{equation}\label{A}
A_q(x):=\df{1}{\Gamma(q+1)}{\sum_{\lambda_n\leq x}}^{\prime}a(n)(x-\lambda_n)^q,
\end{equation}
where the prime $\prime$ on the summation sign indicates that if $q=0$ and $x=\lambda_N$ for some positive integer $N$, then we count only $\tf12a(N)$ in the sum.    Second, define, for $q$ and $x$ as above,
\begin{equation}\label{Q}
Q_q(x):=\df{1}{2\pi i}\int_{C_q}\df{\Gamma(s)\varphi(s)}{\Gamma(s+q+1)}x^{s+q}ds,
\end{equation}
where $C_q$ is a positively oriented closed curve (or curves) with all of the integrand's poles on the interior of $C_q$.  We assume that all of the poles of the integrand lie within a compact set in the $s$-plane.  Thirdly, for the same values of $q$ and $x$ as above, define
\begin{equation}\label{D}
D_q(x):=\sum_{n=1}^{\infty}\df{b(n)}{\mu_n^{r+q}}\mathcal{I}_q(\mu_nx),
\end{equation}
where
\begin{equation}\label{bessel}
\mathcal{I}_q(x):=x^{(r+q)/2}J_{r+q}(2\sqrt{x}),
\end{equation}
where $J_{\nu}(x)$ is the ordinary Bessel function of order $\nu$.  (If we would replace $\Gamma(s)$ in the functional equation \eqref{funcequa} by another suitable product of gamma functions, then the Bessel function $J_{\nu}$ would be replaced by other appropriate Bessel functions.)  Then, for $q>2\sigma_a^*-r-\tf12$ \cite[p.~6]{cn},
\begin{equation}\label{mainbessel}
A_q(x)=Q_q(x)+D_q(x).
\end{equation}
The condition $q>2\sigma_a^*-r-\tf12$ is too weak for many applications.  In particular, in most instances it is not satisfied for $q=0$.  We thus need the following strong theorem of Chandrasekharan and Narasimhan \cite[p.~14, Theorem III]{cn}.

\begin{theorem}\label{cntheorem} Suppose that, for $\beta>\sigma_a^*$,
\begin{equation*}
\sum_{n=1}^{\infty}\df{|b(n)|}{\mu_n^{\beta}}<\infty,
\end{equation*}
\begin{equation*}
\sup_{0\leq h\leq1}\left|\sum_{m^2<\mu_n\leq(m+h)^2}\df{b(n)}{\mu_n^{\beta-\frac12}}\right|=o(1),
\end{equation*}
as $m\to\infty$, and \eqref{mainbessel} holds for some $q>0$.  Then the series of Bessel functions on the right-hand side of \eqref{mainbessel} converges for $q\geq2\beta-r-\tf32$ uniformly in any interval in $x>0$ in which the function on the left-hand side of \eqref{mainbessel} is continuous, and boundedly in any interval $0<x_1\leq x\leq x_2<\infty$ if $q=0$.
\end{theorem}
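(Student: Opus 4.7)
The plan is to establish convergence of the Bessel series $D_q(x)$ at the endpoint exponent $q=2\beta-r-\tfrac32$ (and hence automatically at all larger $q$ in the stated range), then to identify its sum as $A_q(x)-Q_q(x)$ by invoking the assumed validity of \eqref{mainbessel} at some $q_0>0$. Since $A_q(x)$ is a finite Riesz-type sum and $Q_q(x)$ is an explicit residue sum depending continuously on $q$, the heart of the matter is convergence of $D_q$.

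First I would substitute the classical asymptotic expansion
$$J_\nu(2\sqrt{y}) = \frac{1}{\sqrt{\pi}}\, y^{-1/4} \cos\bigl(2\sqrt{y} - (2\nu+1)\pi/4\bigr) + O_\nu(y^{-3/4}), \qquad y\to\infty,$$
into $\mathcal{I}_q(\mu_n x)/\mu_n^{r+q}$. This produces a leading oscillatory term of size $b(n)\,\mu_n^{-(r+q)/2-1/4}$ and a tail of size $b(n)\,\mu_n^{-(r+q)/2-3/4}$. At the critical $q=2\beta-r-\tfrac32$, the exponent on $\mu_n$ in the tail is exactly $\beta$, so the tail is absolutely summable by $\sum|b(n)|/\mu_n^\beta<\infty$. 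The real work is the leading sum
$$\sum_n \frac{b(n)}{\mu_n^{\beta-1/2}}\, \exp\bigl(\pm 2i\sqrt{\mu_n x}\bigr),$$
whose normalization of the coefficients matches the second hypothesis precisely.

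To treat this oscillatory sum I would group indices into blocks $B_m=\{n:m^2<\mu_n\leq (m+1)^2\}$ and apply Abel summation within each block, using $T_m(h)=\sum_{m^2<\mu_n\leq(m+h)^2} b(n)/\mu_n^{\beta-1/2}$ together with the hypothesis $\sup_{0\leq h\leq 1}|T_m(h)|=o(1)$. Since the phase $2\sqrt{\mu_n x}$ has total variation of order $\sqrt{x}$ across a single block, each block contributes $o(1+\sqrt{x})$. A second Abel summation across blocks then exploits the near-periodic behaviour of $e^{2im\sqrt{x}}$ in $m$ to convert the per-block savings into convergence of $\sum_m$, uniformly on compact subintervals $[x_1,x_2]\subset(0,\infty)$.

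The main obstacle is this second, across-blocks, summation by parts at the endpoint exponent: the per-block bound $o(1)$ is not itself summable in $m$, so one must extract additional cancellation from the $m$-oscillation of the phase while preserving uniformity in $x$. Once convergence of $D_q$ at $q=2\beta-r-\tfrac32$ is in hand, the fractional-integration relation $A_{q+1}(x)=\int_0^x A_q(t)\,dt$ together with its analogue for $D_q$, coming from the Bessel recurrence $\frac{d}{dy}[y^\nu J_\nu(y)]=y^\nu J_{\nu-1}(y)$, and the assumed identity at $q_0$, pin down the sum as $A_q(x)-Q_q(x)$. Uniform convergence on intervals of continuity of $A_q$ is a corollary, and the bounded convergence statement at $q=0$ on $[x_1,x_2]$ follows from the same block estimates since they are manifestly bounded uniformly in $x\in[x_1,x_2]$.
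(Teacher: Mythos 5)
First, a point of reference: the paper does not prove this statement at all --- it is quoted verbatim as Theorem III of Chandrasekharan and Narasimhan \cite[p.~14]{cn}, so there is no in-paper argument to compare against; your proposal has to be measured against the cited source. Your reduction is set up correctly: inserting the asymptotic $J_{\nu}(2\sqrt{y})\sim \pi^{-1/2}y^{-1/4}\cos(2\sqrt{y}-c)+O(y^{-3/4})$ into $\mathcal{I}_q(\mu_nx)/\mu_n^{r+q}$ at the endpoint $q=2\beta-r-\tf32$ does produce an absolutely convergent tail governed by $\sum|b(n)|\mu_n^{-\beta}$ and a leading oscillatory sum with coefficients $b(n)\mu_n^{-(\beta-1/2)}$, exactly matching the second hypothesis; and the within-block Abel summation over $B_m=\{n: m^2<\mu_n\le(m+1)^2\}$, where the phase $2\sqrt{\mu_nx}$ has variation $O(\sqrt{x})$, correctly yields a per-block bound $o(1+\sqrt{x})$. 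Up to this point you are following the classical line of attack.

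The genuine gap is the step you yourself flag as ``the main obstacle'': the second, across-blocks summation by parts. As written it cannot close. After the first Abel summation the block contribution is only known to be $O(\epsilon_m(1+\sqrt{x}))$ with $\epsilon_m=o(1)$; its oscillatory structure in $m$ has been discarded, and a series whose $m$-th term is merely $o(1)$ --- even one of the form $\epsilon_m e^{2im\sqrt{x}}$ with $\epsilon_m\to 0$ --- need not converge (take $\epsilon_m=1/\log m$ against the conjugate phase). To exploit cancellation in $m$ you would need to retain each block sum in the form $T_m(1)e^{2im\sqrt{x}}+(\text{error})$, but the error is again only $o(1)$ per block with no summable majorant, so the argument is circular. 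This across-blocks convergence is precisely the hard content of the theorem (it contains Hardy's convergence of $\sum r_2(n)n^{-1/2}J_1(2\pi\sqrt{nx})$ as the case $k=2$, $q=0$), and Chandrasekharan and Narasimhan do not obtain it by a second partial summation: they descend from the absolutely convergent identity at exponent $q+1$ by comparing the partial sums of the $q$-level Bessel series with difference quotients $h^{-1}\{D_{q+1}(x+h)-D_{q+1}(x)\}$ for a suitably coupled $h=h(N)$, the block hypothesis serving to control the error committed in this differencing; the limit is then automatically $A_q(x)-Q_q(x)$ at points of continuity, rather than being identified afterwards as in your last paragraph. Your concluding assertions (uniformity on intervals of continuity, bounded convergence for $q=0$, and the parenthetical claim that convergence at the endpoint ``automatically'' propagates to larger $q$) all rest on the unproved across-blocks estimate, so the proposal as it stands does not constitute a proof.
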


We next state a version of the Vorono\"{\dotlessi} summation formula.  We refer to Theorem 1 of \cite[p.~142]{V}, where the  Vorono\"{\dotlessi} summation formula is stated for an interval $(a,x)$, where $0<a<x$.

\begin{theorem} Let $f\in C^{(1)}[a,x]$, where $0<a<\lambda_1<x$.  Assume that \eqref{mainbessel} is valid for $q=0$.  Suppose that all of the poles of $\varphi(s)$ lie in the half-plane $\sigma>0$. Then, for $x>0$,
\begin{equation}\label{voronoi}
{\sum_{\lambda_n\leq x}}^{\prime}a(n)f(\lambda_n)=\int_{a}^x{Q}^{\prime}_0(t)f(t)dt +\sum_{n=1}^{\infty}\df{b(n)}{\mu_n^{r-1}}
\int_{a}^x\mathcal{I}_{-1}(\mu_nt)f(t)dt.
\end{equation}
\end{theorem}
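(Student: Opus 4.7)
The starting point is the Bessel-series identity \eqref{mainbessel} with $q=0$, namely
\[
A_0(t) = Q_0(t) + D_0(t),
\]
which holds by hypothesis. The strategy is to multiply both sides by $f'(t)$, integrate from $a$ to $x$, and then perform integration by parts on each side. Since $A_0$ is a step function whose jump at $\lambda_n$ equals $a(n)$, Stieltjes integration by parts on the left gives
\[
\int_a^x A_0(t) f'(t)\,dt = A_0(x) f(x) - {\sum_{\lambda_n\leq x}}^{\prime} a(n) f(\lambda_n),
\]
where the boundary term at $t=a$ vanishes because $a<\lambda_1$, and the prime on the sum reflects the midpoint convention built into the definition \eqref{A} at jump points.

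On the right side I would treat $Q_0$ and $D_0$ separately. Since $Q_0$ is smooth by its contour-integral definition \eqref{Q}, ordinary integration by parts yields $\int_a^x Q_0(t) f'(t)\,dt = Q_0(x) f(x) - Q_0(a) f(a) - \int_a^x Q_0'(t) f(t)\,dt$. For $D_0$ I would interchange the sum and integral, obtaining $\sum_n \tfrac{b(n)}{\mu_n^r}\int_a^x \mathcal{I}_0(\mu_n t) f'(t)\,dt$, and then integrate each inner integral by parts using the standard Bessel identity $\tfrac{d}{du}\bigl[u^{\nu/2} J_\nu(2\sqrt{u})\bigr] = u^{(\nu-1)/2} J_{\nu-1}(2\sqrt{u})$, which combined with the chain rule gives $\tfrac{d}{dt}\mathcal{I}_0(\mu_n t) = \mu_n\mathcal{I}_{-1}(\mu_n t)$. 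The resulting boundary contributions reassemble into $D_0(x) f(x) - D_0(a) f(a)$, while the remaining integrals combine to $-\sum_n \tfrac{b(n)}{\mu_n^{r-1}}\int_a^x \mathcal{I}_{-1}(\mu_n t) f(t)\,dt$. Equating the two sides, the terms $f(x) A_0(x)$ and $f(x)\bigl(Q_0(x)+D_0(x)\bigr)$ cancel by \eqref{mainbessel}, the boundary contributions at $t=a$ cancel because $Q_0(a)+D_0(a)=A_0(a)=0$, and \eqref{voronoi} emerges.

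The main technical obstacle is justifying the interchange of summation and integration in the $D_0$ step: the formally differentiated series $\sum \tfrac{b(n)}{\mu_n^{r-1}}\mathcal{I}_{-1}(\mu_n t)$ is not expected to converge pointwise, with convergence only being restored after one further integration against $f$. In the setting where \eqref{mainbessel} for $q=0$ is obtained as in Theorem \ref{cntheorem}, the partial sums of $D_0$ are bounded on the compact interval $[a,x]$, and since $f' \in C[a,x]$ is itself bounded, the dominated convergence theorem legitimises the interchange. All other steps are routine integration by parts together with elementary Bessel-function manipulations.
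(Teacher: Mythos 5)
Your argument is correct and is essentially the standard proof: the paper does not prove this theorem itself but cites it as Theorem 1 of \cite{V}, and your route --- writing the left side as a Stieltjes integral against $A_0$, substituting $A_0=Q_0+D_0$ from \eqref{mainbessel}, integrating by parts term by term via $\tfrac{d}{dt}\mathcal{I}_0(\mu_n t)=\mu_n\mathcal{I}_{-1}(\mu_n t)$, and justifying the interchange through the bounded convergence of the partial sums of $D_0$ on compact subintervals of $(0,\infty)$ as in Theorem \ref{cntheorem} --- is exactly the argument of that source. The treatment of the boundary terms, in particular the cancellation coming from $A_0(a)=Q_0(a)+D_0(a)=0$ when $a<\lambda_1$ and the half-weight convention when $x=\lambda_N$, is handled correctly.
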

\textbf{Remark.} For a function $f\in C^{r}[0,\infty)$, we can perform integration by parts $r$ times for the integral inside the sum on the right of \eqref{voronoi}, thereby increasing the power of $\mu_n$ in the bottom. This will then allow the series with the resulting integral in the summand to converge, leaving only the question about the convergence of the infinite series of terms arising from the boundary values after integrating by parts. Additional restrictions may be needed on $f$ for the latter to converge.\\

In Theorems 2 and 3 of \cite[p.~142]{V}, Theorem 1 is extended to include the case $a=0$.  However, the claim that Theorem 1 can be extended to the case $a=0$ is incorrect, because of a faulty argument by the author of \cite{V}.  The mistake arises when attempting to extend the last displayed equality in the proof of Theorem 1 to the case $a=0$.  Let us consider correcting Theorem 3, where it is assumed that all of the poles of $\varphi(s)$ lie in the half-plane $\sigma>0$.  We note that
\begin{equation}\label{Q0}
Q_{0}(a)=\df{1}{2\pi i}\int_{C_0}\df{\varphi(s)}{s}a^sds=\varphi(0)a^0+\cdots,
\end{equation}
where the terms represented by $+\cdots$ contain powers of $a$ with positive real parts, because of our assumption about the poles of $\varphi(s)$.  Hence, we do not obtain a contribution to the aforementioned equality equal to 0, as we previously claimed, but instead our contribution is
$$\lim_{a\to0}Q_{0}(a)f(a)=\lim_{a\to0}\varphi(0)f(a).$$
We now state a corrected version of Theorem 3 of \cite{V}.

\begin{theorem}\label{v} Let $f\in C^{(1)}[0,\infty)$.  Assume that \eqref{mainbessel} is valid for $q=0$.  Suppose that all of the poles of $\varphi(s)$ lie in the half-plane $\sigma>0$. Then, for $x>0$,
\begin{equation}\label{voronoib}
{\sum_{\lambda_n\leq x}}^{\prime}a(n)f(\lambda_n)=\lim_{a\to0}\varphi(0)f(a)+\int_0^x{Q}^{\prime}_0(t)f(t)dt +\sum_{n=1}^{\infty}\df{b(n)}{\mu_n^{r-1}}
\int_0^x\mathcal{I}_{-1}(\mu_nt)f(t)dt.
\end{equation}
\end{theorem}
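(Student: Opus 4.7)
The strategy is to apply the preceding Voronoï summation formula (Theorem~1 of \cite{V}) on $(a,x)$ with $0<a<\l_1\le x$ and then let $a\to 0^+$, carefully isolating the boundary contribution at the origin that was incorrectly set equal to zero in \cite{V}. For $0<a<\l_1$, Theorem~1 gives
\begin{equation*}
{\sum_{\l_n\le x}}^{\prime}a(n)f(\l_n)=\int_a^x Q_0'(t)f(t)\,dt+\sum_{n=1}^{\i}\df{b(n)}{\mu_n^{r-1}}\int_a^x\mathcal{I}_{-1}(\mu_n t)f(t)\,dt,
\end{equation*}
and the left-hand side is independent of $a$. By \eqref{Q0} and the hypothesis on the poles of $\varphi(s)$, we have $Q_0(t)=\varphi(0)+\sum_k c_k t^{s_k}$ with each $\Re(s_k)>0$, so $Q_0'(t)$ is integrable at $t=0$ and the first integral converges to $\int_0^x Q_0'(t)f(t)\,dt$ as $a\to 0^+$.

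The crux is to show that, as $a\to 0^+$,
\begin{equation*}
\sum_{n=1}^{\i}\df{b(n)}{\mu_n^{r-1}}\int_0^a\mathcal{I}_{-1}(\mu_n t)f(t)\,dt\longrightarrow -\varphi(0)f(0),
\end{equation*}
which, via the decomposition $\int_a^x=\int_0^x-\int_0^a$, produces exactly the boundary term $\varphi(0)f(0)$ in \eqref{voronoib}. To prove this I would split $f(t)=f(0)+(f(t)-f(0))$. The Bessel recurrence $\tfrac{d}{dt}\mathcal{I}_0(\mu_n t)=\mu_n\mathcal{I}_{-1}(\mu_n t)$ combined with $\mathcal{I}_0(0)=0$ (valid since $r>0$) yields $\int_0^a\mathcal{I}_{-1}(\mu_n t)\,dt=\mathcal{I}_0(\mu_n a)/\mu_n$, so the $f(0)$-piece equals
\begin{equation*}
f(0)\sum_{n=1}^{\i}\df{b(n)}{\mu_n^r}\mathcal{I}_0(\mu_n a)=f(0)D_0(a).
\end{equation*}
Since $A_0(a)=0$ for $a<\l_1$ and $A_0=Q_0+D_0$ by \eqref{mainbessel}, we have $D_0(a)=-Q_0(a)\to-\varphi(0)$ by \eqref{Q0}; this delivers the contribution $-\varphi(0)f(0)$. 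The complementary piece $\sum_n(b(n)/\mu_n^{r-1})\int_0^a\mathcal{I}_{-1}(\mu_n t)(f(t)-f(0))\,dt$ is controlled using $|f(t)-f(0)|\le\|f'\|_{[0,a]}t$ together with the convergence properties of the Bessel series guaranteed by Theorem~\ref{cntheorem}, and is shown to be $o(1)$.

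Putting these pieces together, and using $\lim_{a\to 0}\varphi(0)f(a)=\varphi(0)f(0)$ by continuity, produces \eqref{voronoib}. The main obstacle is the quantitative control of the remainder $\sum_n(b(n)/\mu_n^{r-1})\int_0^a\mathcal{I}_{-1}(\mu_n t)(f(t)-f(0))\,dt$: because the termwise-differentiated Bessel series $\sum_n(b(n)/\mu_n^{r-1})\mathcal{I}_{-1}(\mu_n t)$ is not, in general, absolutely convergent near $t=0$, showing that this remainder vanishes requires combining the smoothness $f\in C^{(1)}$ with the oscillation of $\mathcal{I}_{-1}$---a Riemann--Lebesgue-type estimate. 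This is precisely the subtlety missed by the author of \cite{V}: identifying $D_0(a)$ with its (vanishing) termwise value at $a=0$ instead of with its genuine limit $-\varphi(0)$ is what caused $\varphi(0)f(0)$ to be inadvertently dropped.
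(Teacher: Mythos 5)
Your route is essentially the paper's correction in different packaging. The paper does not rebuild the proof from scratch; it revisits the argument of \cite{V} and observes that the boundary contribution at the origin equals $\lim_{a\to0}Q_0(a)f(a)=\varphi(0)\lim_{a\to0}f(a)$ rather than $0$, by \eqref{Q0} and the hypothesis that the poles of $\varphi(s)$ lie in $\sigma>0$. Your derivation of the extra term via $f(0)D_0(a)$ together with $D_0(a)=A_0(a)-Q_0(a)=-Q_0(a)\to-\varphi(0)$ for $0<a<\lambda_1$ is exactly the same mechanism, the antiderivative identity $\int_0^a\mathcal{I}_{-1}(\mu_nt)\,dt=\mathcal{I}_0(\mu_na)/\mu_n$ is correct, and your closing diagnosis of the error in \cite{V} (confusing the termwise value $D_0(0)=0$ with the limit $D_0(0^+)=-\varphi(0)$) matches the paper's. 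Starting from Theorem 1 on $(a,x)$ and letting $a\to0^+$ is a legitimate and arguably more self-contained way to organize this.

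The gap is the one you flag yourself, and it is not a formality. The remainder $\sum_{n}\bigl(b(n)/\mu_n^{r-1}\bigr)\int_0^a\mathcal{I}_{-1}(\mu_nt)\,(f(t)-f(0))\,dt$ cannot be handled by the bound $|f(t)-f(0)|\le\|f'\|\,t$ with absolute values: since $|\mathcal{I}_{-1}(x)|\ll x^{r/2-3/4}$, that estimate produces a majorant of the form $a^{r/2+5/4}\sum_n|b(n)|\mu_n^{1/4-r/2}$, which converges only when $\sigma_a^*<r/2-1/4$ — false already for $b(n)=r_k(n)$, $r=k/2$, $\sigma_a^*=k/2$, i.e.\ in the very application the theorem is built for. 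The workable repair is another integration by parts, writing the remainder as $g(a)D_0(a)-\int_0^aD_0(t)g'(t)\,dt$ with $g=f-f(0)$, which tends to $0$ because $D_0=-Q_0$ is bounded on $(0,\lambda_1)$; but the required interchange of summation and integration on an interval abutting $0$ needs uniform boundedness of the partial sums of $D_0$ there, and Theorem \ref{cntheorem} guarantees bounded convergence only on compact subintervals of $(0,\infty)$. A related, smaller point: splitting $\sum_n(\cdots)\int_a^x$ as $\sum_n(\cdots)\int_0^x-\sum_n(\cdots)\int_0^a$ presupposes the convergence of the first series, which is part of the conclusion; you should instead establish convergence of $\sum_n(\cdots)\int_0^a$ for fixed small $a$ and add. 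So the plan is sound and parallel to the paper's, but the step where the real analytic work lives — precisely the step that tripped up \cite{V} — is left open.
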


Perhaps all versions of the Vorono\"{\dotlessi} summation formula before 1934 required that the arithmetic functions $a(n)$ and $b(n)$ satisfy \eqref{mainbessel} for $q=0$ or for an analogous identity when the arithmetic function, e.g., $d(n)$, the number of positive divisors of $n$, is generated by a Dirichlet series satisfying one of the alternative functional equations briefly alluded to above. However, Koshliakov \cite{kosh1}, \cite{kosh2} established a version of \eqref{voronoi} that sheds this hypothesis.  The version of Koshliakov's theorem that we offer below is more general than that established by Koshliakov and is in a different, but equivalent, formulation.  However, the proof of this more general result would follow along exactly the same lines as that of Koshliakov.  On the other hand, Koshliakov's theorem and our modest generalization below are less general than traditional versions of the Vorono\"{\dotlessi} summation formula because we require that $f$ be analytic.

\begin{theorem}\label{vv} \cite[p.~10]{kosh1} Let $0<\alpha<\beta$, and let $M$ and $N$ be integers such that $\lambda_{M-1}<\alpha<\lambda_{M}$ and $\lambda_{N}<\beta<\lambda_{N+1}$.  Suppose that $f(z)$ is an analytic function containing the interval $[\alpha,\beta]$ in its domain of analyticity. If  the infinite series and the integrals on the right side of \eqref{voronoi} converge uniformly on $[\alpha,\beta]$, then
\begin{equation}\label{voronoia}
{\sum_{\lambda_{M}\leq n\leq\lambda_{N}}}^{\prime}a(n)f(\lambda_n)=\int_{\alpha}^{\beta}{Q}^{\prime}_0(t)f(t)dt +\sum_{n=1}^{\infty}\df{b(n)}{\mu_n^{r-1}}
\int_{\alpha}^{\beta}\mathcal{I}_{-1}(\mu_nt)f(t)dt.
\end{equation}
\end{theorem}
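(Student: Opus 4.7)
The plan is to interpret the sum on the left of \eqref{voronoia} as a Riemann--Stieltjes integral against the counting function $A_0$ defined in \eqref{A}, then use the Bessel series identity $A_0(x) = Q_0(x) + D_0(x)$ from \eqref{mainbessel} at $q = 0$ to split this integral into a smooth piece coming from $Q_0$ and a series piece coming from $D_0$. Applying integration by parts to each piece produces a derivative of the Bessel kernel $\mathcal{I}_0$, which, by a standard identity, equals $\mathcal{I}_{-1}$, and so reproduces the right-hand side of \eqref{voronoia}.

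Concretely, since the hypotheses $\lambda_{M-1} < \alpha < \lambda_M$ and $\lambda_N < \beta < \lambda_{N+1}$ place both endpoints strictly away from the jumps of $A_0$, one has
\begin{equation*}
{\sum_{\lambda_M \leq n \leq \lambda_N}}' a(n) f(\lambda_n) = \int_\alpha^\beta f(t)\,dA_0(t).
\end{equation*}
The function $Q_0$ is smooth on $[\alpha, \beta]$ because its defining contour integral \eqref{Q} represents an entire function off the compact pole set, so the $Q_0$ portion contributes $\int_\alpha^\beta Q_0'(t) f(t)\,dt$ after an ordinary integration by parts. For the $D_0$ portion, I would use Stieltjes integration by parts
\begin{equation*}
\int_\alpha^\beta f(t)\,dD_0(t) = [f(t) D_0(t)]_\alpha^\beta - \int_\alpha^\beta f'(t) D_0(t)\,dt,
\end{equation*}
substitute the defining series $D_0(t) = \sum_n (b(n)/\mu_n^r)\mathcal{I}_0(\mu_n t)$ from \eqref{D}, and interchange summation with the boundary evaluation and with the integration. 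A routine calculation from $\frac{d}{du}(u^\nu J_\nu(u)) = u^\nu J_{\nu-1}(u)$ gives $\frac{d}{dx}\mathcal{I}_0(x) = \mathcal{I}_{-1}(x)$, so that after integrating by parts termwise the $n$th summand reduces to $(b(n)/\mu_n^{r-1})\int_\alpha^\beta f(t)\mathcal{I}_{-1}(\mu_n t)\,dt$, yielding precisely the series in \eqref{voronoia}.

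The principal obstacle is justifying the interchange of summation with the Stieltjes operations in the $D_0$ piece, since at $q = 0$ the series for $D_0$ is typically only conditionally convergent; this is exactly where the stronger hypotheses of the theorem, namely analyticity of $f$ and the uniform-convergence assumption on the right side of \eqref{voronoi}, are needed, in contrast to the $C^{(1)}$ setting of Theorem \ref{v}. Following Koshliakov's strategy, one would exploit the analyticity of $f$ to deform the real segment $[\alpha,\beta]$ into a contour in the complex plane on which the oscillatory Bessel functions, paired with an $f$ that decays off the real axis, force enough cancellation in the partial sums to legitimize the interchange. Once that step is secured, the remainder of the argument is the routine Stieltjes bookkeeping sketched above.
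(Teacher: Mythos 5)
The paper itself offers no proof of Theorem \ref{vv}: the authors explicitly forgo it, stating that it ``would follow along exactly the same lines'' as Koshliakov's argument in \cite{kosh1}, \cite{kosh2}. So the comparison here is against the theorem's intended content rather than a written proof, and on that score your proposal has a fundamental flaw. Your very first substantive step invokes $A_0(x)=Q_0(x)+D_0(x)$, i.e.\ \eqref{mainbessel} with $q=0$. That assumption is deliberately \emph{absent} from the hypotheses of Theorem \ref{vv}; contrast its statement with Theorem \ref{v} and the theorem preceding it, both of which explicitly include ``Assume that \eqref{mainbessel} is valid for $q=0$.'' The whole point of Koshliakov's version, as the paper stresses both in the introduction and immediately before Theorem \ref{vv} (``Koshliakov established a version of \eqref{voronoi} that sheds this hypothesis''), is to obtain the summation formula \emph{without} assuming \eqref{mainbessel} at $q=0$. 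This is not a cosmetic weakening: in the paper's main application, $a(n)=r_k(n)$, the identity \eqref{bessel1} requires $q>\tfrac12(k-3)$, which ``appears to be best possible,'' so for every $k>2$ the decomposition $A_0=Q_0+D_0$ that you take as your starting point is simply unavailable --- the series defining $D_0(x)$ need not converge, let alone equal $A_0(x)-Q_0(x)$. Your Stieltjes bookkeeping and the identity $\frac{d}{dt}\mathcal{I}_0(\mu_n t)=\mu_n\mathcal{I}_{-1}(\mu_n t)$ are fine as far as they go, but they prove a theorem carrying an extra hypothesis that renders it useless for proving Popov's identity for $k>2$, which is precisely what Theorem \ref{vv} exists to do.

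Your closing paragraph also mislocates the difficulty. The obstacle is not the interchange of summation with the Stieltjes operations \emph{after} writing $A_0=Q_0+D_0$; it is that this pointwise decomposition cannot be written down at all when $q=0$ lies below the Chandrasekharan--Narasimhan threshold. Consequently the contour-deformation heuristic you sketch, which is aimed at securing an interchange of limits within a framework that already presupposes \eqref{mainbessel} at $q=0$, cannot repair the argument. A correct proof must exploit the analyticity of $f$ and the assumed uniform convergence of the right-hand side of \eqref{voronoi} to bypass \eqref{mainbessel} at $q=0$ entirely --- for instance by working at a value of $q$ large enough that \eqref{mainbessel} is known to hold and then descending to $q=0$, which is the essence of Koshliakov's ``very ingenious argument.'' Any proof that begins from \eqref{mainbessel} with $q=0$ does not establish Theorem \ref{vv} as stated.
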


In his hypotheses, Koshliakov assumes that the order of summation and integration on the right-hand side of \eqref{voronoia} can be
inverted.  Our hypothesis on the uniform convergence ensures that such an inversion of limiting operations is justified.  If we let  $\alpha\to0$ in Theorem \ref{vv}, we would obtain \eqref{voronoib} with $x$ replaced by $\beta$.

\section{Application of Theorems \ref{v} and \ref{vv}: Popov's Identity}\label{sect2}  We apply Theorems \ref{v} and \ref{vv} to
$$ \varphi(s)=\psi(s)=\zeta_k(s)=\sum_{n=1}^{\infty}r_k(n)n^{-s}, \quad \sigma > \tf12k,$$
where $r_k(n)$ is the number of representations of $n$ as a sum of $k$ squares.  As already indicated in \eqref{funcequa1}, the functional equation for $\zeta_k(s)$ is given by
$$ \pi^{-s}\Gamma(s)\varphi(s)=\pi^{-(k/2-s)}\Gamma(\tf12k-s)\varphi(\tf12k-s).$$
Thus,
$$ \lambda_n=\mu_n=\pi n,\quad n\geq1,\quad a(n)=b(n)=r_k(n),\quad n\geq1,\quad r=\tf12k.$$
Also,
\begin{equation}\label{Qa}
Q_0(x)=-1+\df{x^{k/2}}{\Gamma(\tf12k+1)}.
\end{equation}
For the properties above, one may  consult the paper by K.~Chandrasekharan and R.~Narasimhan \cite[pp.~18--19]{cn}.
We note that $\varphi(0) =-1$.  Thus, for suitable functions $f(x)$, by Theorems \ref{v} and \ref{vv},
\begin{align}\label{1}
{\sum_{\lambda_n\leq x}}^{\prime}r_k(n)f(\lambda_n)&=-\lim_{a\to0}f(a)+\int_0^x\df{t^{k/2-1}}{\Gamma(\tf12k)}f(t)dt\notag\\
&\quad+\sum_{n=1}^{\infty}\df{r_k(n)}{(\pi n)^{k/4-1/2}}\int_0^xt^{k/4-1/2}J_{k/2-1}(2\sqrt{\pi nt})f(t)dt.
\end{align}
Replacing $x$ by $\pi x$ in \eqref{1}, we find that
\begin{align}\label{2}
{\sum_{n\leq x}}^{\prime}r_k(n)f(\pi n)&=-\lim_{a\to0}f(a)+\int_0^{\pi x}\df{t^{k/2-1}}{\Gamma(\tf12k)}f(t)dt\notag\\
&\quad+\sum_{n=1}^{\infty}\df{r_k(n)}{(\pi n)^{k/4-1/2}}\int_0^{\pi x}t^{k/4-1/2}J_{k/2-1}(2\sqrt{\pi nt})f(t)dt.
\end{align}
In each of the two integrals above, set $t=\pi u$.  Hence, \eqref{2} becomes
\begin{align}\label{3}
{\sum_{n\leq x}}^{\prime}r_k(n)f(\pi n)&=-\lim_{a\to0}f(a)+\df{\pi^{k/2}}{\Gamma(\tf12k)}\int_0^{ x}u^{k/2-1}f(\pi u)du\notag\\
&\quad+\pi\sum_{n=1}^{\infty}\df{r_k(n)}{ n^{k/4-1/2}}\int_0^{ x}u^{k/4-1/2}J_{k/2-1}(2\pi\sqrt{nu})f(\pi u)du.
\end{align}
Now, replace $f(\pi x)$ by
$$ \df{f(x)}{(\pi x)^{k/4-1/2}}$$
and then multiply both sides of the equation by $\pi^{k/4-1/2}$ to arrive at
\begin{align}\label{4}
{\sum_{n\leq x}}^{\prime}r_k(n)\df{f(n)}{n^{k/4-1/2}}&=-\lim_{a\to0}\df{f(a)}{a^{k/4-1/2}}+\df{\pi^{k/2}}{\Gamma(\tf12k)}\int_0^{ x}u^{k/4-1/2}f(u)du\notag\\
&\quad+\pi\sum_{n=1}^{\infty}\df{r_k(n)}{ n^{k/4-1/2}}\int_0^{ x}J_{k/2-1}(2\pi\sqrt{nu})f( u)du.
\end{align}
The identity \eqref{4} is the identity at the bottom of the first page of Popov's paper \cite{popov}.

We now set
$$ f(u):=J_{k/2-1}(2\pi\sqrt{zu})e^{-\pi tu}, \quad z>0,\quad \text{Re }t>0,$$
and let $x\to\infty$ in \eqref{4}.

First,  we need to calculate
\begin{equation}\label{5}
\lim_{a\to0}\df{J_{k/2-1}(2\pi\sqrt{za})e^{-\pi ta}}{a^{k/4-1/2}}=\df{\pi^{k/2-1}z^{k/4-1/2}}{\Gamma(\tf12k)},
\end{equation}
where we have used the definition of the Bessel function of order $\nu$ \cite[p.~40, eq.~(8)]{watson}.

Second, setting $u=x^2$, we evaluate the integral
\begin{align}\label{6}
\int_0^{ \infty}u^{k/4-1/2}J_{k/2-1}(2\pi\sqrt{zu})e^{-\pi tu}du&=2\int_0^{\infty}x^{k/2}J_{k/2-1}(2\pi x\sqrt{z})e^{-\pi tx^2}dx\notag\\
&=\df{z^{k/4-1/2}}{\pi t^{k/2}}e^{-\pi z/t},
\end{align}
where we have used the evaluation \cite[eq.~6.631, no.~4.]{gr}.

Third, setting $u=x^2$, we find that
\begin{align}\label{7}
\int_0^{ \infty}J_{k/2-1}(2\pi\sqrt{nu})f( u)du&=\int_0^{ \infty}J_{k/2-1}(2\pi\sqrt{nu})J_{k/2-1}(2\pi\sqrt{zu})e^{-\pi tu}du\notag\\
&=2\int_0^{ \infty}x J_{k/2-1}(2\pi x\sqrt{n})J_{k/2-1}(2\pi x\sqrt{z})e^{-\pi tx^2}dx\notag\\
&=\df{1}{\pi
  t}\exp\left(-\df{\pi(n+z)}{t}\right)I_{k/2-1}\left(\df{2\pi\sqrt{nz}}{t}\right),
\end{align}
where $I_{\nu}(z)$ is the Bessel function of imaginary argument of order $\nu$, and where we have used an evaluation found in \cite[eq.~6.633, no.~2]{gr}.

If we now substitute \eqref{5}--\eqref{7} into \eqref{4}, we conclude the proof of Popov's beautiful theorem below, which is precisely equation (6) in Popov's paper \cite{popov} and \eqref{88} above.
\begin{theorem}\label{popovtheorem} For $\text{Re } t>0$,
\begin{align}\label{8}
&\df{\pi^{k/2-1}z^{k/4-1/2}}{\Gamma(\tf12k)}+\sum_{n=1}^{\infty}r_{k}(n)\df{J_{k/2-1}(2\pi\sqrt{nz})}{n^{k/4-1/2}}e^{-\pi nt}\notag\\
&=\df{e^{-\pi z/t}}{t}\left\{\df{\pi^{k/2-1}z^{k/4-1/2}}{t^{k/2-1}\Gamma(\tf12k)}
+\sum_{n=1}^{\infty}r_{k}(n)\df{I_{k/2-1}\left(\df{2\pi\sqrt{nz}}{t}\right)}{
  n^{k/4-1/2}}e^{-\pi n/t}\right\}.
\end{align}
\end{theorem}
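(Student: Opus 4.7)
The plan is to take the master identity \eqref{4} as the starting point and substitute the specific test function
$$ f(u)=J_{k/2-1}(2\pi\sqrt{zu})\,e^{-\pi tu}, \qquad z>0,\ \Re t>0, $$
into it, then let $x\to\infty$. Since \eqref{4} was already obtained by applying the corrected Vorono\"{\i} summation formulas (Theorems \ref{v} and \ref{vv}) to $\varphi=\psi=\zeta_k$, the reductions $\lambda_n=\mu_n=\pi n$, $a(n)=b(n)=r_k(n)$, $r=k/2$, together with the computation of $Q_0$ in \eqref{Qa} and the observation $\varphi(0)=-1$, are all already in hand. Thus the theorem should reduce to three explicit calculations plus a justification of the passage to the limit.

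The three pieces that must be evaluated are: (i) the boundary contribution
$\lim_{a\to 0} f(a)/a^{k/4-1/2}$, which by the series representation of $J_{\nu}$ (whose leading term is $(x/2)^\nu/\Gamma(\nu+1)$) collapses to $\pi^{k/2-1}z^{k/4-1/2}/\Gamma(\tfrac12 k)$, supplying the constant on the left side of \eqref{8} (with the appropriate sign from \eqref{4}); (ii) the main integral
$\int_0^\infty u^{k/4-1/2}J_{k/2-1}(2\pi\sqrt{zu})e^{-\pi tu}\,du$, which after the substitution $u=x^2$ is a classical Hankel-type evaluation of Weber/Gegenbauer form and contributes $z^{k/4-1/2}e^{-\pi z/t}/(\pi t^{k/2})$, producing the first term inside the braces on the right of \eqref{8}; (iii) the Bessel-product integral
$\int_0^\infty J_{k/2-1}(2\pi\sqrt{nu})J_{k/2-1}(2\pi\sqrt{zu})e^{-\pi tu}\,du$, again reduced by $u=x^2$ to a Weber second-exponential integral whose value is proportional to $t^{-1}\exp(-\pi(n+z)/t)\,I_{k/2-1}(2\pi\sqrt{nz}/t)$, accounting for the $n$-indexed term on the right of \eqref{8}. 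Each of these is available in Gradshteyn--Ryzhik (formulas 6.631.4 and 6.633.2), so they are routine once one identifies them.

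The main obstacle will be the analytic justification of the $x\to\infty$ limit in \eqref{4}. On the left-hand side, convergence of $\sum_{n}r_k(n)J_{k/2-1}(2\pi\sqrt{nz})e^{-\pi nt}/n^{k/4-1/2}$ is clear from the exponential decay $e^{-\pi nt}$ with $\Re t>0$ combined with the classical bound $|J_\nu(x)|\ll x^{-1/2}$. The delicate issue is on the right, where one must interchange the limit $x\to\infty$ with the infinite Bessel-series sum. To apply Theorem \ref{vv} on a finite interval $[\alpha,\beta]$ and then push $\alpha\to 0$, $\beta\to\infty$, one needs uniform convergence of both the Bessel series and the integral remainder in $x$. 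The strategy is to exploit $\Re t>0$ to obtain exponential decay in every integrand: for fixed $n$, $\int_x^\infty J_{k/2-1}(2\pi\sqrt{nu})J_{k/2-1}(2\pi\sqrt{zu})e^{-\pi tu}\,du$ is $O(e^{-\pi\Re(t)x})$ uniformly in $n$ (after using $|J_\nu|\le 1$ or a similar envelope), and the same is true of the $u^{k/4-1/2}J_{k/2-1}(2\pi\sqrt{zu})e^{-\pi tu}$ integral. Combined with Theorem \ref{cntheorem} to control the Bessel series for $q=0$ boundedly in compact $x$-intervals, this permits the interchange and hence the $x\to\infty$ limit. Assembling the three evaluations \eqref{5}--\eqref{7} with the appropriate signs and the factor $\pi$ in front of the Bessel series in \eqref{4} then yields \eqref{8}.
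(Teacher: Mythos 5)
Your proposal follows essentially the same route as the paper: substitute $f(u)=J_{k/2-1}(2\pi\sqrt{zu})e^{-\pi tu}$ into \eqref{4}, let $x\to\infty$, and evaluate the three resulting pieces via the Bessel series expansion and Gradshteyn--Ryzhik 6.631.4 and 6.633.2, exactly as in \eqref{5}--\eqref{7}. Your additional care about justifying the interchange of the limit $x\to\infty$ with the Bessel series is a sensible supplement that the paper passes over silently.
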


  It is clear that we can apply Theorems \ref{v} and \ref{vv} to derive similar identities for arithmetical functions generated by a Dirichlet series satisfying a functional equation of the type  \eqref{funcequa}.  Thus, for example, we can state a similar identity when $a(n)=\tau(n)$, Ramanujan's tau-function. We can also replace the quadratic form $m^2+n^2$ by any positive definite quadratic form $Q(x,y)$.  Thus, we can derive a similar result for $r(Q,n)$, the number of representations of the positive integer $n$ by the quadratic form $Q$.  In Section \ref{sun}, we provide further application of Theorems \ref{v} and \ref{vv}.

\section{An Analogue of a Theorem of Ramanujan from His Lost Notebook}\label{sun}

Recall that in our remark at the end of Section \ref{sect2}, we mentioned that Theorems \ref{v} and \ref{vv} can be applied to other arithmetical functions generated by Dirichlet series satisfying a functional equation with a simple gamma factor. In this section, we establish such an analogue.  Then, we use this analogue to derive a formula which is in the same spirit as a remarkable identity from Ramanujan's lost notebook \cite{lnb}, which was first established by three of the present authors  in \cite{bsa1}.


For a Dirichlet character $\chi,$ we define
$$d_{\chi}(n)=\sum_{d \mid n}\chi(d).$$

\begin{theorem}
Let $\chi$ be an odd primitive character modulo $q,$ and let $z>0$ and $\text{Re}~ t>0.$ Then,
\begin{align}\label{odd}
\sum_{n=1}^{\infty}d_{\chi}(n)J_0(4\pi\sqrt{nz})e^{-n\pi t}&=-\dfrac{1}{2}L(0, \chi)+\frac{L(1,\chi)}{\pi t}e^{-4\pi z/t} \notag\\
&\quad -\dfrac{2i\tau(\chi)}{tq}
\sum_{n=1}^{\infty}d_{\overline{\chi}}(n)e^{-4\pi(n/q+z)/t}I_0\Big(\frac{8\pi}{t}\sqrt{nz/q}\Big),
\end{align}
where $\tau(\chi)$ denotes the Gauss sum
\begin{align*}
\tau(\chi):=\sum_{h=1}^{q-1}\chi(h)e^{2\pi ih/q}.
\end{align*}
\end{theorem}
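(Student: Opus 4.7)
The plan is to mimic Section \ref{sect2}, applying the Vorono\"{\dotlessi} summation formulas of Theorems \ref{v} and \ref{vv} to the Dirichlet series
\begin{equation*}
\varphi(s):=\sum_{n=1}^{\infty}\frac{d_\chi(n)}{n^s}=\zeta(s)L(s,\chi), \qquad \psi(s):=\zeta(s)L(s,\bar\chi).
\end{equation*}
Combining the functional equations of $\zeta(s)$ and of $L(s,\chi)$ for odd primitive $\chi$ modulo $q$ with the Legendre duplication formula yields
\begin{equation*}
\Gamma(s)\varphi(s)=\frac{-i\tau(\chi)}{2\pi}\left(\frac{4\pi^{2}}{q}\right)^{s}\Gamma(1-s)\psi(1-s),
\end{equation*}
which, after absorbing the exponential factor into the coefficient sequence, places us in the framework of Section \ref{sumformulas} with $\lambda_n=4\pi^{2}n/q$, $\mu_n=n$, $a(n)=d_\chi(n)$, $b(n)=-i\tau(\chi)d_{\bar\chi}(n)/(2\pi)$, and $r=1$.

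For this data, the integrand $\widetilde\varphi(s)x^{s}/s$ in \eqref{Q}, with $\widetilde\varphi(s):=(q/(4\pi^{2}))^{s}\varphi(s)$, has exactly two simple poles: at $s=0$ (residue $\widetilde\varphi(0)=\zeta(0)L(0,\chi)=-L(0,\chi)/2$) and at $s=1$, inherited from $\zeta(s)$ (residue $qL(1,\chi)x/(4\pi^{2})$). Hence $Q_{0}'(t)=qL(1,\chi)/(4\pi^{2})$. Theorem \ref{v} then produces an analogue of \eqref{1}, and the change of variable $t=4\pi^{2}u/q$ together with the renaming $g(u):=f(4\pi^{2}u/q)$, in exact parallel with the passage \eqref{1}--\eqref{4}, transforms it into
\begin{align*}
{\sum_{n\le X}}^{\prime}d_\chi(n)g(n)={}&-\frac{L(0,\chi)}{2}g(0)+L(1,\chi)\int_{0}^{X}g(u)\,du\\
&-\frac{2\pi i\tau(\chi)}{q}\sum_{n=1}^{\infty}d_{\bar\chi}(n)\int_{0}^{X}J_{0}\!\left(4\pi\sqrt{nu/q}\right)g(u)\,du,
\end{align*}
where $X:=qx/(4\pi^{2})$.

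The Popov-style choice $g(u):=J_{0}(4\pi\sqrt{zu})e^{-\pi tu}$ together with $X\to\infty$ now completes the proof. Since $g(0)=J_{0}(0)=1$, the first term becomes $-L(0,\chi)/2$. The substitution $u=x^{2}$ and \cite[eq.~6.631, no.~4]{gr} evaluate $\int_{0}^{\infty}g(u)\,du$ to $e^{-4\pi z/t}/(\pi t)$, producing $L(1,\chi)e^{-4\pi z/t}/(\pi t)$. The same substitution applied to the Bessel--Bessel integral, combined with the Weber--Schafheitlin formula \cite[eq.~6.633, no.~2]{gr}, gives $e^{-4\pi(n/q+z)/t}I_{0}(8\pi\sqrt{nz/q}/t)/(\pi t)$; the overall coefficient $-2\pi i\tau(\chi)/q$ then collapses to $-2i\tau(\chi)/(qt)$, reproducing the last term of \eqref{odd}. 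The left side is the desired $\sum_{n=1}^{\infty}d_\chi(n)J_{0}(4\pi\sqrt{nz})e^{-\pi tn}$.

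The principal obstacle is verifying the analytic hypotheses of Theorems \ref{v} and \ref{vv} for this $g$: one must confirm \eqref{mainbessel} with $q=0$ for the data above, which via Theorem \ref{cntheorem} reduces to the estimate $\sum_{n=1}^{\infty}|b(n)|/n^{\beta}<\infty$ for some $\beta$ slightly above $1$ (immediate from $|b(n)|\le\sqrt{q}\,d(n)/(2\pi)$) together with the oscillatory short-sum bound $\sup_{0\le h\le 1}|\sum_{m^{2}<n\le(m+h)^{2}}b(n)/n^{\beta-1/2}|=o(1)$ as $m\to\infty$ (which holds by an absolute bound using $d(n)\ll n^{\varepsilon}$). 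One must also justify the passage $X\to\infty$ via uniform convergence of the Bessel series in $u$, which rests on $d_\chi(n)\ll n^{\varepsilon}$ and $J_{0}(y)\ll y^{-1/2}$. These are essentially the same estimates underlying the proof of Popov's identity in Section \ref{sect2}.
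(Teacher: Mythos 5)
Your proposal is correct and follows essentially the same route as the paper: apply Theorem \ref{v} to $\zeta(s)L(s,\chi)$ with the odd-character functional equation, compute $Q_0$, insert the Popov-type test function, and evaluate the two resulting integrals via \cite[eqs.~6.631 no.~4 and 6.633 no.~2]{gr}. The only cosmetic differences are your asymmetric normalization of $\lambda_n$ and $\mu_n$ (the paper takes $\lambda_n=\mu_n=2\pi n/\sqrt{q}$ and rescales $z\mapsto zq$, $t\mapsto tq$ at the end) and your appeal to Theorem \ref{cntheorem} to justify \eqref{mainbessel} at $q=0$, where the paper instead cites \cite[Theorem 12]{bsa1}.
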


\begin{proof} If $\chi$ is an odd primitive character modulo $q,$ then from the functional equations of the Riemann zeta function and the Dirichlet $L$-function associated with $\chi$ \cite[pp.~59, 71]{davenport},
\begin{align}\label{ofe}
\Big(\frac{2\pi}{\sqrt{q}}\Big)^{-s}\Gamma(s)L(s, \chi)\zeta(s)
=-\frac{i\tau(\chi)}{\sqrt{q}}\Big(\frac{2\pi}{\sqrt{q}}\Big)^{s-1}\Gamma(1-s)L(1-s,\overline{\chi})\zeta(1-s).
\end{align}
We let
$$\varphi(s):=\Big(\frac{2\pi}{\sqrt{q}}\Big)^{-s}L(s,\chi)\zeta(s)=\Big(\frac{2\pi}{\sqrt{q}}\Big)^{-s}
\sum_{n=1}^{\infty}\frac{d_{\chi}(n)}{n^s}$$
and
$$\psi(s):=-\frac{i\tau(\chi)}{\sqrt{q}}\Big(\frac{2\pi}{\sqrt{q}}\Big)^{-s}L(s,\overline{\chi})\zeta(s)
=-\frac{i\tau(\chi)}{\sqrt{q}}\Big(\frac{2\pi}{\sqrt{q}}\Big)^{-s} \sum_{n=1}^{\infty}\frac{d_{\overline{\chi}}(n)}{n^s}.$$
Then, from \eqref{ofe}, it follows that
$$\Gamma(s)\varphi(s)=\Gamma(1-s)\psi(1-s).$$
Also, by Theorem 12 in \cite{bsa1}, we see that $A_0(x)=Q_0(x)+D_0(x),$
where $a(n)=d_{\chi}(n),$ $\lambda_n=\mu_n=2\pi n/\sqrt{q},$ and  $b(n)=-i\tau(\chi)d_{\overline{\chi}}(n)/\sqrt{q}.$
Thus, using Theorems \ref{v} or \ref{vv}, and \cite[Theorem 12]{bsa1}, we deduce that
\begin{align}\label{oddthm}
{\sum_{\lambda_n\leq x}}^{\prime}d_{\chi}(n)f(\lambda_n)&=-\frac12L(0, \chi)\lim_{a\rightarrow 0}f(a)
+\int_0^x Q'_0(t)f(t) \, dt \notag\\&\quad-\frac{i\tau(\chi)}{\sqrt{q}}\sum_{n=1}^{\infty}d_{\overline{\chi}}(n)\int_0^x J_0(2\sqrt{\mu_n t}) f(t) \, dt.
\end{align}
Since $\zeta(0)=-\tf12$ and $\zeta(s)$ has a simple pole at $s=1$ with residue $1$, we find that
$$Q_0(t)=\frac{1}{2\pi i}\int_{C_{0}} \frac{\varphi(s)}{s}t^s \, ds=-\df12 L(0, \chi)+\frac{\sqrt{q}}{2\pi}L(1,\chi)t,$$
and so $$Q'_0(t)=\frac{\sqrt{q}}{2\pi}L(1,\chi).$$
Replacing $x$ by $2\pi x/\sqrt{q}$ in \eqref{oddthm}, we thus have
\begin{align}\label{oddthm1}
{\sum_{n\leq x}}^{\prime}d_{\chi}(n)f(2\pi n/\sqrt{q})&=-\frac12L(0, \chi)\lim_{a\rightarrow 0}f(a)
+\frac{\sqrt{q}}{2\pi}L(1, \chi)\int_0^{2\pi x/\sqrt{q}} f(t) \, dt \notag\\
&\quad-\frac{i\tau(\chi)}{\sqrt{q}}\sum_{n=1}^{\infty}d_{\overline{\chi}}(n)\int_0^{2\pi x/\sqrt{q}}  J_0(2\sqrt{2\pi n t}/q^{1/4}) f(t) \, dt.
\end{align}
In each of the integrals in \eqref{oddthm1}, we set $t=2\pi u/\sqrt{q}.$  It follows that
\begin{align}\label{oddthm2}
{\sum_{n\leq x}}^{\prime}d_{\chi}(n)f(2\pi n/\sqrt{q})&=-\frac12L(0, \chi)\lim_{a\rightarrow 0}f(a)
+L(1, \chi)\int_0^{x} f(2\pi u/\sqrt{q}) \, du \notag\\
&\quad -\frac{2\pi i\tau(\chi)}{q}\sum_{n=1}^{\infty}d_{\overline{\chi}}(n)\int_0^{x}  J_0(4\pi\sqrt{nu/q}) f(2\pi u/\sqrt{q}) \, du.
\end{align}
Also,  replace $f(2\pi x/\sqrt{q})$ by $$g(x):=J_0(4\pi\sqrt{zx/q})e^{-\pi tx/q},$$ and let $x\rightarrow \infty$ in \eqref{oddthm2} to obtain
\begin{align}\label{oddthm3}
\sum_{n=1}^{\infty}d_{\chi}(n)J_0(4\pi\sqrt{zn/q})e^{-\pi tn/q}&=-\frac12L(0, \chi)g(0)
+L(1, \chi)\int_0^{\infty} g(u) \, du \\
&\quad -\frac{2\pi i\tau(\chi)}{q}\sum_{n=1}^{\infty}d_{\overline{\chi}}(n)\int_0^{\infty}  J_0(4\pi\sqrt{nu/q}) g(u)\, du.\notag
\end{align}
We note that $g(0)=1.$ We now evaluate the integrals in \eqref{oddthm3}.
First observe that
\begin{align}\label{g}
\int_0^{\infty} g(u) \, du&=\int_0^{\infty} J_0(4\pi\sqrt{zu/q})e^{-\pi tu/q} \, du \notag\\
&=2\int_0^{\infty} xJ_0(4\pi x \sqrt{z/q})e^{-\pi tx^2/q} \, dx  \notag\\
&=\frac{q}{\pi t}e^{-4\pi z/t},
\end{align}
where we set $u=x^2$ for the second equality, and we used \cite[eq. 6.631, no. 4]{gr} for the last equality.

Similarly, we find that
\begin{align}\label{Jg}
\int_0^{\infty}  J_0(4\pi\sqrt{nu/q}) g(u)\, du&=\int_0^{\infty}  J_0(4\pi\sqrt{nu/q})J_0(4\pi\sqrt{zu/q})e^{-\pi tu/q}\, du \notag\\
&=2\int_0^{\infty} xJ_0(4\pi x\sqrt{n/q})J_0(4\pi x \sqrt{z/q})e^{-\pi tx^2/q} \, dx  \notag\\
&=\frac{q}{\pi t}e^{-4\pi(n+z)/t}I_0(8\pi\sqrt{nz}/t),
\end{align}
where we used \cite[eq. 6.633, no. 2]{gr}.
Putting \eqref{g} and \eqref{Jg} into \eqref{oddthm3}, and replacing $z$ and $t$ by $zq$ and $tq,$ respectively, we complete the proof of \eqref{odd}.
\end{proof}

Next, we derive the following theorem from \eqref{odd}.

\begin{theorem}\label{pop} Let $0<\theta<1, z>0$ and $\text{Re}~ t>0.$ Then,
\begin{align}\label{sine}
&\sum_{n=1}^{\infty}J_0\big(4\pi\sqrt{nz}\big)e^{-n\pi t}\sum_{d \mid n}\sin(2\pi d\theta)=-\frac14\cot(\pi\theta)+\frac{e^{-4\pi z/t}}{t}\Big(\frac12-\theta\Big) \notag\\
& \qquad+\frac{e^{-4\pi z/t}}{t}\left\{\sum_{m=1}^{\infty}\sum_{r=0}^{\infty}\frac{I_0\Big(8\pi\sqrt{mz(r+\theta)}/t\Big)}{e^{4\pi m(r+\theta)/t}}
-\frac{I_0\Big(8\pi\sqrt{mz(r+1-\theta)}/t\Big)}{e^{4\pi m(r+1-\theta)/t}} \right\}.
\end{align}
\end{theorem}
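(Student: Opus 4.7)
My plan is to apply Theorem \ref{v} (or the more delicate Theorem \ref{vv}) to the arithmetic function
\begin{equation*}
a(n):=\sum_{d\mid n}\sin(2\pi d\theta),
\end{equation*}
whose Dirichlet series factors as
\begin{equation*}
\varphi(s)=\sum_{n=1}^{\infty}\df{a(n)}{n^s}=\zeta(s)L_s(\theta),\qquad L_s(\theta):=\sum_{n=1}^{\infty}\df{\sin(2\pi n\theta)}{n^s},
\end{equation*}
for $\sigma>1$. Hurwitz's formula gives
\begin{equation*}
\zeta(1-s,\theta)-\zeta(1-s,1-\theta)=\df{4\Gamma(s)\sin(\pi s/2)}{(2\pi)^s}L_s(\theta),
\end{equation*}
and when this is combined with the functional equation of the Riemann zeta function the $\sin(\pi s/2)$ factors cancel, leaving the clean relation $\Gamma(s)\varphi(s)=\Gamma(1-s)\psi(1-s)$, where the dual admits the explicit double-sum expansion
\begin{equation*}
\psi(w)=\pi\sum_{m=1}^{\infty}\sum_{k=0}^{\infty}\left\{\df{1}{(4\pi^2 m(k+\theta))^w}-\df{1}{(4\pi^2 m(k+1-\theta))^w}\right\}.
\end{equation*}
This places us in the framework of Section \ref{sumformulas} with $\lambda_n=n$, parameter $r=1$ in \eqref{funcequa}, and dual nodes $4\pi^2 m(k+\theta),\,4\pi^2 m(k+1-\theta)$ carrying coefficients $\pm\pi$; the index $k$ plays the role of $r$ in \eqref{sine}.

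Next I would read off the polar data of $\varphi$. The only pole lies at $s=1$, with residue $L_1(\theta)=\pi(\tf12-\theta)$ (the classical sawtooth Fourier series, valid for $0<\theta<1$), while $\varphi(0)=\zeta(0)L_0(\theta)=-\tf14\cot(\pi\theta)$, the value $L_0(\theta)=\tf12\cot(\pi\theta)$ being obtained by letting $s\to 0$ in the Hurwitz relation above and using the digamma reflection formula $\psi(1-\theta)-\psi(\theta)=\pi\cot(\pi\theta)$ together with the Laurent expansion of $\zeta(w,a)$ at $w=1$. Consequently
\begin{equation*}
Q_0(u)=-\tf14\cot(\pi\theta)+\pi(\tf12-\theta)u,\qquad Q_0^{\prime}(u)=\pi(\tf12-\theta).
\end{equation*}

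I would then substitute $f(u)=J_0(4\pi\sqrt{zu})e^{-\pi tu}$ into \eqref{voronoib}, let $x\to\infty$, and evaluate each piece in exact analogy with Section \ref{sect2}. Because $r=1$, the kernel in the Voronoi series is $\mathcal{I}_{-1}(\mu_n u)=J_0(2\sqrt{\mu_n u})$, which for the node $\mu=4\pi^2 m(k+\theta)$ becomes $J_0(4\pi\sqrt{m(k+\theta)u})$. The boundary term $\varphi(0)\lim_{a\to 0}f(a)=-\tf14\cot(\pi\theta)$ (using $J_0(0)=1$) produces the first summand on the right of \eqref{sine}; the integral $\int_0^{\infty}Q_0^{\prime}(u)f(u)\,du$, computed via $u=x^2$ and formula 6.631.4 of \cite{gr}, produces $\tf{1}{t}(\tf12-\theta)e^{-4\pi z/t}$; and the dual integrals
\begin{equation*}
\int_0^{\infty}J_0(4\pi\sqrt{m(k+\theta)u})J_0(4\pi\sqrt{zu})e^{-\pi tu}\,du
\end{equation*}
together with their $k+1-\theta$ counterparts, evaluated by the same substitution and formula 6.633.2 of \cite{gr}, combine with the prefactor $\pi$ to reproduce the $I_0$ double series on the right of \eqref{sine} exactly.

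The chief obstacle will be to verify the hypotheses of Theorems \ref{v} and \ref{vv} in this dual two-parameter setting. Concretely, I must (a) check that \eqref{mainbessel} holds for $q=0$ for our $\varphi$, for which I would invoke Theorem \ref{cntheorem} together with standard convexity bounds for $|\zeta(w,\theta)-\zeta(w,1-\theta)|$ along vertical lines, and (b) establish the uniform convergence on compact subsets of $(0,\infty)$ needed in Theorem \ref{vv} that licenses interchanging the double sum over $(m,k)$ with the integration in $u$. Closely related is the delicate matter of the order of summation on the right-hand side of \eqref{sine}: as in Ramanujan's Entry \ref{b.besselseries} (treated in \cite{bessel1} and \cite{bessie3}), the double series must be read as $\sum_m\sum_k$ and is not absolutely convergent in general, so the ordering of the dual Dirichlet series $\psi$ must be handled with care.
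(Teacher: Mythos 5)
Your computation is correct and would indeed produce \eqref{sine}, but it takes a genuinely different route from the paper. You apply the Vorono\"{\dotlessi} formula directly to $a(n)=\sum_{d\mid n}\sin(2\pi d\theta)$, obtaining the functional equation from Hurwitz's formula, with a doubly-indexed dual supported on $4\pi^2m(k+\theta)$ and $4\pi^2m(k+1-\theta)$; your polar data ($\varphi(0)=-\tfrac14\cot(\pi\theta)$, residue $\pi(\tfrac12-\theta)$ at $s=1$) and the three integral evaluations match the paper's \eqref{g} and \eqref{Jg} and assemble into \eqref{sine} exactly. The paper instead first proves the identity \eqref{odd} for $d_{\chi}(n)$ with $\chi$ an odd primitive character modulo a prime $q$ (where the dual is a single, cleanly ordered Dirichlet series and the hypothesis \eqref{mainbessel} at $q=0$ is supplied by Theorem 12 of \cite{bsa1}), then recovers the case $\theta=a/q$ by summing \eqref{odd} against $\chi(a)\tau(\overline{\chi})/(i\phi(q))$ over odd characters, and finally extends to all $\theta\in(0,1)$ by continuity and density of such rationals. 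The trade-off is exactly at your step (a): for your $\varphi$, establishing \eqref{mainbessel} at $q=0$ with the iterated ordering $\sum_m\sum_k$ is essentially Entry \ref{b.besselseries} itself, i.e., the content of \cite{bessel1} and \cite{bessie3}; it will not follow from Theorem \ref{cntheorem} plus convexity bounds alone, both because the oscillation hypothesis for the dense node set $\{4\pi^2m(k+\theta)\}$ amounts to an error estimate in a shifted divisor problem, and because Theorem \ref{cntheorem} yields convergence in the ordering by increasing $\mu_n$, not the iterated double sum. So your route is legitimate but imports the deepest ingredient as a black box, whereas the paper's character decomposition sidesteps it. Conversely, your final worry about the ordering of the double series on the right-hand side of \eqref{sine} is unfounded: by the asymptotic \eqref{iasym}, the $I_0$ terms carry the damping $\exp\bigl(-4\pi(\sqrt{m(r+\theta)}-\sqrt{z})^2\,\Re(1/t)\bigr)$ up to lower-order factors, so that series converges absolutely and uniformly; the ordering issue lives only in the $J$-Bessel hypothesis, not in the conclusion.
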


\begin{proof}
Note that from \cite[p.~240, eq. (9.54)]{temme}, \cite[p.~203]{watson}, for Re $w>0$ and for large values of $|w|$,
\begin{equation}\label{iasym}
I_{\nu}(w)\sim\frac{e^w}{\sqrt{2\pi w}}\sum_{m=0}^{\infty}(-1)^m\frac{(\nu,m)}{(2w)^m},
\end{equation}
where 
\begin{align*}
(\nu,m)=\frac{\Gamma(\nu+m+1/2)}{\Gamma(m+1)\Gamma(\nu-m+1/2)}.
\end{align*}
From \eqref{iasym}, we observe that the double series on the right-hand side of \eqref{sine} converges absolutely and uniformly on any compact interval for $\theta \in (0,1)$
and so represents a continuous function of $\theta$ there.
Therefore, it suffices to prove \eqref{sine} for $\theta=a/q,$ where $q$ is prime and  $0<a<q.$
We first multiply both sides of \eqref{odd} by $\chi(a)\tau(\overline{\chi})/i\phi(q),$ where $\phi(q)$ is Euler's totient function, and sum on odd primitive characters $\chi$ modulo $q.$
Then, the left-hand side of \eqref{odd} becomes
\begin{align}\label{sine1}
&\frac{1}{i\phi(q)}\sum_{\chi ~ \text{odd}}\chi(a)\tau(\overline{\chi})\sum_{n=1}^{\infty}d_{\chi}(n)J_0(4\pi\sqrt{nz})e^{-n\pi t} \notag\\
&\qquad =\frac{1}{i\phi(q)}\sum_{n=1}^{\infty}J_0(4\pi\sqrt{nz})e^{-n\pi t}\sum_{\chi ~ \text{odd}}\chi(a)\tau(\overline{\chi})\sum_{d \mid n}\chi(d) \notag\\
&\qquad=\sum_{n=1}^{\infty}J_0(4\pi\sqrt{nz})e^{-n\pi t}\sum_{d \mid n}\frac{1}{i\phi(q)}\sum_{\chi ~ \text{odd}}\chi(a)\tau(\overline{\chi})\chi(d) \notag\\
&\qquad=\sum_{n=1}^{\infty}J_0(4\pi\sqrt{nz})e^{-n\pi t}\sum_{d \mid n}\sin\Big(\frac{2\pi da}{q}\Big),
\end{align}
where the last equality follows from the formula \cite[Lemma 2.5]{bkz3}
\begin{align*}
\sin\Big(\frac{2\pi na}{q}\Big)=\frac{1}{i\phi(q)}\sum_{\chi ~ \text{odd}}\chi(a)\tau(\overline{\chi})\chi(n).
\end{align*}

Now, examine the right-hand side of \eqref{odd}.
In order to evaluate the first term on the right-hand side of \eqref{odd}, we recall from \cite[p.~2072]{bsa1} or \cite[p.~11]{davenport}, and
\cite[eq.~(2.8)]{bkz4}
\begin{align*}
L(0, \chi)=-\frac{i\tau(\chi)}{\pi}L(1, \overline{\chi})=\frac{i}{2\tau(\overline{\chi})}\sum_{1\leq h<q}\overline{\chi}(h)\cot\Big(\frac{\pi h}{q}\Big).
\end{align*}
We also need the formula \cite[eq.~(3.7)]{bsa1}
\begin{align}\label{oddah}
\sum_{\chi ~ \text{odd}}\chi(a)\overline{\chi}(h) =
\begin{cases}\pm \frac12\phi(q), & \text{if} \quad h\equiv \pm a \pmod q,\\
0, & \text{otherwise}.
\end{cases}
\end{align}
Then,
{\allowdisplaybreaks\begin{align}\label{sine2}
&\frac{1}{i\phi(q)}\sum_{\chi ~ \text{odd}}\chi(a)\tau(\overline{\chi})L(0, \chi)\notag \\
&\qquad =\frac{1}{2\phi(q)}\sum_{1\leq h<q}\cot\Big(\frac{\pi h}{q}\Big)\sum_{\chi ~ \text{odd}}\chi(a)\overline{\chi}(h) \notag\\
&\qquad =\frac14\left\{\cot\Big(\frac{\pi a}{q}\Big)-\cot\Big(\frac{\pi(q-a)}{q}\Big)\right\} \notag\\
&\qquad =\frac12\cot\Big(\frac{\pi a}{q}\Big).
\end{align}}
Thus, we obtain the first term on the right-hand side of \eqref{sine}.

Next, using the formula \cite[eq.~(3.5)]{bsa1}
\begin{align*}
\tau(\overline{\chi})L(1,\chi)=\pi i\sum_{1\leq h<q}\overline{\chi}(h)\Big(\frac12-\frac{h}{q}\Big),
\end{align*}
we obtain
\begin{align}\label{sine3}
&\frac{1}{i\phi(q)}\sum_{\chi ~ \text{odd}}\chi(a)\tau(\overline{\chi})L(1,\chi) \notag\\
&\qquad =\frac{\pi}{\phi(q)}\sum_{\chi ~ \text{odd}}\chi(a)\sum_{1\leq h<q}\overline{\chi}(h)\Big(\frac12-\frac{h}{q}\Big) \notag\\
&\qquad=\frac{\pi}{\phi(q)}\sum_{1\leq h<q}\Big(\frac12-\frac{h}{q}\Big)\sum_{\chi ~ \text{odd}}\chi(a)\overline{\chi}(h) \notag\\
&\qquad=\pi\Big(\frac12-\frac{a}{q}\Big),
\end{align}
where the last equality follows from \eqref{oddah}.  This gives the second expression on the right-hand side of \eqref{sine}.

Lastly, using the fact that $\tau(\chi)\tau(\overline{\chi})=-q$ and employing \eqref{oddah} once again, we deduce that
{\allowdisplaybreaks\begin{align}\label{sine4}
&-\frac{1}{i\phi(q)}\sum_{\chi ~ \text{odd}}\chi(a)\tau(\overline{\chi})\frac{2i\tau(\chi)}{q}
\sum_{n=1}^{\infty}d_{\overline{\chi}}(n)e^{-4\pi n/(tq)}I_0\Big(\frac{8\pi}{t}\sqrt{nz/q}\Big) \notag\\
&\qquad=\frac{2}{\phi(q)}\sum_{n=1}^{\infty}e^{-4\pi n/(tq)}I_0\Big(\frac{8\pi}{t}\sqrt{nz/q}\Big)
\sum_{\chi ~ \text{odd}}\chi(a)\sum_{d\mid n}\overline{\chi}(d) \notag\\
&\qquad=\frac{2}{\phi(q)}\sum_{n=1}^{\infty}e^{-4\pi n/(tq)}I_0\Big(\frac{8\pi}{t}\sqrt{nz/q}\Big)
\sum_{d\mid n}\sum_{\chi ~ \text{odd}}\chi(a)\overline{\chi}(d) \notag\\
&\qquad=\sum_{n=1}^{\infty}e^{-4\pi n/(tq)}I_0\Big(\frac{8\pi}{t}\sqrt{nz/q}\Big)
\Big(\sum_{\substack{d\mid n\\d\equiv a \pmod q}}1- \sum_{\substack{d\mid n\\d\equiv -a \pmod q}}1\Big) \notag\\
&\qquad=\sum_{m=1}^{\infty}\sum_{r=0}^{\infty}\left\{\frac{I_0\Big(\frac{8\pi}{t}\sqrt{mz(r+a/q)}\Big)}{e^{4\pi m(r+a/q)/t}}
 -\frac{I_0\Big(\frac{8\pi}{t}\sqrt{mz(r+1-a/q)}\Big)}{e^{4\pi m(r+1-a/q)/t}}\right\},
\end{align}}%
where we replace $n$ by $m(rq+a)$ and $m(rq+q-a)$, respectively, in the last line.
Hence, using \eqref{odd} and  \eqref{sine2}--\eqref{sine4}, we complete the proof.
\end{proof}

\begin{center}
\textbf{Acknowledgements}
\end{center}
We sincerely thank Dmitry Vasilenko, Vice-Rector for International Relations at St.~Petersburg State University of Economics, for providing biographical information of A.~I.~Popov as well as a list of his $12$ publications. We also thank Professor Alexander Kurganov of the Department of Mathematics, Tulane University, and Professor Olga S.~Rozanova of the Faculty of Mechanics and Mathematics, Moscow State University, for sending us a copy of the paper \cite{kosh2}.

\end{document}